\documentclass[12pt]{article}
\usepackage[mathscr]{eucal}
\usepackage{amssymb,latexsym}
\usepackage{verbatim}
\usepackage{amsmath}
\usepackage{amsthm}
\usepackage{pdfsync}

\usepackage[normalem]{ulem} 
\usepackage{color}
\usepackage{authblk}

\usepackage[%
bookmarks=true,
colorlinks,
linkcolor=blue,
urlcolor=blue,
citecolor=blue,
plainpages=false,
pdfpagelabels,
final,
breaklinks=true\between
]{hyperref}
\usepackage{hyperref}

\newtheorem{thm}{Theorem}[section]
\newtheorem{lem}[thm]{Lemma}

\newcommand\calE{{\mathcal{E}}}

\DeclareMathOperator{\inn}{in}
\DeclareMathOperator{\out}{out}

\renewcommand\l{\lambda}

\newcommand\bbR{{\mathbb R}}
\newcommand\bbZ{{\mathbb{Z}}}

\newcommand\vep{\varepsilon}

\renewcommand\S{\Sigma}

\renewcommand\d{\partial}

\newcommand\D{\nabla}
\newcommand\e{\epsilon}

\renewcommand\div{{\rm div}}

\renewcommand\l{\lambda}

\newcommand\8{\infty}

\renewcommand\th{\theta}

\newcommand\vs{\vspace}

\newcommand\beq{\begin{eqnarray}}
\newcommand\eeq{\end{eqnarray}}
\newcommand\ben{\begin{enumerate}}
\newcommand\een{\end{enumerate}}
\newcommand\bit{\begin{itemize}}
\newcommand\eit{\end{itemize}}

\newcounter{mnotecount}[section]

\title{Positive mass theorems for manifolds with \\ ALH toroidal ends}

\author[1]{Gregory J. Galloway\footnote{galloway@math.miami.edu}}
\author[2]{Tin-Yau Tsang\footnote{tytsang@math.ubc.ca}}

\affil[1]{Department of Mathematics,
\linebreak University of Miami, Coral Gables, FL, USA}

\affil[2]{Department of Mathematics, 
\linebreak University of British Columbia, Vancouver, BC, CA}

\begin{document}

\date{}

\maketitle

\begin{abstract}  
In work with P. Chru\'sciel, L. Nguyen and T.-T. Paetz \cite{CGNP}, a positive mass theorem was obtained for asymptotically locally hyperbolic manifolds with boundary, having a toroidal end. The proof made use of properties of marginally outer trapped surfaces (MOTS).  Here we present some new PMT results for such manifolds, but without boundary, which allow for other more general ends.  The proofs, while still  MOTS-based, involve a more elaborate technique (related to $\mu$-bubbles) introduced in work of D. A. Lee, M. Lesourd, and R. Unger \cite{LLU23} for manifolds with an asymptotically flat end, and further developed in \cite{TinYau} for manifolds with an asymptotically hyperbolic end.  
\end{abstract}

\section{Introduction}
In \cite{ChruDelay}, Chru\'sciel and Delay established positivity of mass in the asymptotically hyperbolic (AH) setting, without requiring a spin assumption.  These results have recently been  extended to the case with ``arbitrary ends" \cite{TinYau}.  
A result in the asymptotically locally hyperbolic (ALH) setting  was considered in \cite{CGNP}.
Theorem~1.1 in \cite{CGNP} establishes the nonnegativity of the mass of a toroidal end 
$\calE$  in an ALH manifold $(M^n,g)$, $4 \le n \le 7$, assuming  the scalar curvature  satisfies $R \ge -n(n-1)$, and the mean curvature of $\d\calE$ satisfies $H < n-1$. 
In fact, as noted in \cite{ChruPRD}, it is sufficient to assume $H \le n-1$. Moreover, the assumed product topology can be weakened; see \cite[Remark~6]{ChruPMT}.  

Theorem 1.1 in \cite{CGNP} also considers the case of an ALH manifold with boundary, with a spherical space (quotient of a sphere) end, the proof of which is 
substantially different from the torus case.  The spherical space case with ``arbitrary ends'' is addressed in~\cite{TinYau}.  

In this note we  consider the ALH case with a toroidal end, which otherwise allows for more general ends. 
We focus here on proving the following.
    
 \begin{thm}\label{thm:pm1}  Let $(M^n,g)$, $4 \le n \le 7$, be a complete orientable ALH manifold with a conformally compactifiable toroidal end $\calE$, 
and with scalar curvature $R \ge -n(n-1)$.  Suppose further that $N = \overline{M\setminus\calE}$ is noncompact and asymptotically retractible onto $\d\cal{E}$. Then the mass of the end $\calE$ is nonnegative. 
\end{thm}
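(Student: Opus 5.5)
We argue by contradiction. Suppose the mass of $\calE$ is negative. The first step is a standard density/deformation argument, as in \cite{CGNP} and \cite{ChruDelay}. Using a conformal change near infinity together with a gluing, we replace $g$ on $\calE$ by a metric that, outside a compact set, is exactly a Horowitz--Myers type metric (or a perturbation of the Kottler-type toroidal model) with negative mass parameter. We also arrange strict inequality $R > -n(n-1)$ on a large region. The point of this normalization is that the model metric has explicit barriers. In the coordinate $r$ along the end, the level tori $T_r=\{r\}\times T^{n-1}$ have mean curvature $H_r$ with $H_r > n-1$ for $r$ large, precisely because the mass is negative. This is the ALH analogue of the mean-convexity exploited in \cite{CGNP}.

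Next we set up the $\mu$-bubble (generalized MOTS) problem, following \cite{LLU23} and \cite{TinYau}. Since $N$ is noncompact, we cannot use a boundary barrier as in \cite{CGNP}. Instead we choose a smooth function $h$ on $M$, blowing up to $-\infty$ along a far-out hypersurface in $N$ and equal to $n-1$ (or slightly adjusted) on the end. We then minimize the brane functional
\[
\mathcal{A}(\Omega)=\mathcal{H}^{n-1}(\d^*\Omega)-\int_M(\chi_\Omega-\chi_{\Omega_0})\,h\,dV
\]
among Caccioppoli sets $\Omega$ whose boundaries are homologous to a level torus. We choose $h$ with $|\D h|$ small relative to $h^2-(n-1)^2$, so that
\[
R+n(n-1)+\bigl(h^2-(n-1)^2\bigr)-2|\D h|>0 .
\]
The outer barrier is $T_r$ with $H_r>h$, coming from negative mass. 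The inner barrier is the blow-up of $h$ in $N$. Together these confine minimizers, and regularity holds since $n\le7$. The asymptotic retractibility of $N$ onto $\d\calE$ is what guarantees that the minimizer $\S=\d\Omega$ carries nontrivial degree: the retraction composed with projection $\d\calE\cong T^{n-1}$ gives a map $\S\to T^{n-1}$ of nonzero degree.

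Finally, we use the stability inequality for the $\mu$-bubble. Combined with the scalar curvature estimate above, it yields, after the standard rearrangement (Schoen--Yau / Fischer-Colbrie--Schoen), a positive first eigenvalue for the operator $-\Delta_\S+\tfrac12 R_\S$ after warping. Hence $\S$, or $\S\times S^1$ warped, admits a metric of positive scalar curvature. But $\S$ has a nonzero-degree map to $T^{n-1}$, so it is enlargeable/Schoen--Yau–Schick. This contradicts the results of Schoen--Yau and Gromov--Lawson in dimensions $\le 7$.

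The main obstacle I expect is the choice of $h$ compatible with both ends simultaneously. On $\calE$ the barrier only gives $H_r>n-1$ by an amount of order $r^{-n}$ from the mass term. So $h$ must be tuned to stay between $n-1$ and $H_r$ while keeping $h^2-(n-1)^2-2|\D h|$ controlled against the tiny positive margin of $R+n(n-1)$ produced in the deformation step. I would first try $h=n-1+\vep\,\phi$, with $\phi$ a cutoff decaying like $r^{-n}$ on $\calE$ and transitioning in $N$ to a function like $\coth$ of the distance to a far hypersurface, mimicking the construction in \cite{TinYau}.
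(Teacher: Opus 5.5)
Your architecture is the paper's. Its initial data set $(M_0,g,\hat K)$, with $\hat K=-\lambda g-\frac{h}{n-1}g$, makes a MOTS exactly a hypersurface with $H=\lambda(n-1)+h$, i.e.\ a $\mu$-bubble whose prescribed function is $\lambda(n-1)+h$ with $h\ge 0$. The paper concludes via an outermost MOTS, Lemma~\ref{lem:cohomology} and local rigidity rather than stability $\Rightarrow$ positive scalar curvature; that difference is inessential.

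The gap is the step you yourself flag as the main obstacle: you never produce an admissible prescribed function, and your suggested fix aims at the wrong place. The function only has to lie below $H$ on the single barrier torus, because you minimize in the compact region between that torus and the inner hypersurface. So there is no reason to let it decay to $n-1$ along $\mathcal{E}$. Moreover, decay like $r^{-n}$ is exactly borderline: for $h=n-1+\varepsilon r^{-n}$ the leading terms of $\frac{n}{n-1}(h^2-(n-1)^2)$ and $2|\nabla h|$ cancel, and the mean curvature function of the Kottler foliation gives equality. To beat this you invoke a strict inequality $R>-n(n-1)$ from your Step~1, which is not justified. The paper uses only \cite[Thm.~1.3]{CGNP} (the source of the restriction $n\ge 4$) to make the mass aspect function a negative constant; it needs no gluing to an exact model and no strict inequality for $R$.

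There is also a sign problem. With $H=h$ measured with respect to the normal toward infinity and $h\approx n-1$ on the end, $h$ must tend to $+\infty$, not $-\infty$, at the inner hypersurface (your $\coth$ profile suggests you may intend this). Otherwise $h$ crosses $0$, where $R+\frac{n}{n-1}h^2-2|\nabla h|>0$ would force $R>0$.

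The missing idea is to take the positivity margin from the mean curvature rather than from the scalar curvature. Once the mass aspect function is a negative constant, a fixed torus $\Sigma_1=\{x=x_1\}$ satisfies $H_1>\lambda(n-1)$ for some constant $\lambda>1$; $\lambda-1$ is tiny (of order $x_1^n$) but fixed. Prescribe the constant $\lambda(n-1)$ on all of $U_2\supset\Sigma_1$. Then $R+\frac{n}{n-1}\lambda^2(n-1)^2\ge n(n-1)(\lambda^2-1)>0$ follows from $R\ge -n(n-1)$ alone, with no gradient term.

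The only gradient cost is the rise to $+\infty$, which begins in the compact collar $\overline U_1\setminus U_2$ and is completed inside $N$ within distance $D_0$. With the Lee--Lesourd--Unger profile, $|\nabla h|\le\frac{2(1+\varepsilon)}{(D_0-2\varepsilon)D_1}$ on the collar and $|\nabla h|\le\frac{1+\varepsilon}{2}h^2$ beyond it. Choosing $D_0$ large after $\lambda$ is fixed therefore lets the margin $n(n-1)(\lambda^2-1)$ absorb everything. This choice is possible precisely because $M$ is complete and $N$ is noncompact, which is where those hypotheses are really used. With $\lambda=1$, as in your plan, one genuinely needs a strict scalar curvature hypothesis on the collar; that is condition \eqref{eq:strictness} in Theorem~\ref{thm:pm2}.
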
   

Various terms in the statement will be defined in the next section.  The `asymptotically retractible' assumption is trivially satisfied if $M$ is topologically a product, i.e.\ if $M$ is diffeomorphic to $\bbR \times T^{n-1}$.

As a special class of illustrative examples, consider 
the  Birmingham-Kottler metrics \cite{ChruBook, Birm}, with negative cosmological constant, in the $k =0$ (specifically, flat toroidal) case. In the usual coordinates these are given by, 
\beq
 M = (r_0, \infty) \times T^{n-1} \,, \qquad g_m = \frac{1}{r^2 - \frac{2m}{r^{n-2}}}dr^2  + r^2 h \,,
\eeq
where $(T^{n-1}, h)$ is the standard flat torus, and (up to a positive scale factor) $m$ is the mass. 

If  $m >0$ then  $r_0$ is the unique  root of 
 $\varphi(r) = r^2 - \frac{2m}{r^{n-2}}$.  Similar to the situation for the time slices of the standard Schwarzschild solution, 
$r = r_0$ is a coordinate singularity. As $r \to r_0$, the torus $T_r = \{r\} \times T^{n-1}$ limits to a totally geodesic flat torus.  $M$ can be continued
by, roughly speaking, reflecting across this torus; see  \cite{Brill}, \cite[Secs.\ 5.5, 6.3]{ChruBook}. The result is a complete time slice with topology $\bbR \times T^{n-1}$ and scalar curvature $R = -n(n-1)$. 

If instead $m < 0$, one can take $r_0 = 0$. In this case the metric $g_m$ extends a finite distance (from any fixed torus) to $r =0$, at which the curvature blows up in an orthonormal frame (even though the scalar curvature remains $-n(n-1)$); see \cite[Sec.\ 5.5.2]{ChruBook}.  In the special case $m= 0$ we obtain the locally hyperbolic metric, 
\beq\label{eq:lochyp}
g_0 = \frac{1}{r^2} dr^2 + r^2 h
\eeq
which after the coordinate change $u = \ln r$ gives,
$$
M = (-\8,\8) \times T^{n-1} \,, \qquad  g_0 =  du^2 + e^{2u} h  \,.
$$
We remark that this solution has been shown under various conditions to be the unique manifold with vanishing mass in the ALH setting; see \cite{ChruPRD, HuangJang}.   

Thus, we observe that for this class of Birmingham-Kottler metrics the hypotheses of 
Theorem~\ref{thm:pm1} are satisfied if and only if the mass is nonnegative, $m \ge 0$.
On the other hand, consider the Horowitz-Myers soliton \cite{HMsoliton}. It is a static spacetime solution to the vacuum Einstein equations with negative cosmological constant, which has negative mass. The time slices are asymptotically toroidal and, under suitable scaling, satisfy all the assumptions of Theorem \ref{thm:pm1}, except one: the assumption that the set 
$N = \overline{M\setminus\calE}$ be noncompact. In the Horowitz-Myers case, $N$ is diffeomorphic to 
$T^{n-2} \times D^2$. 

Regarding the dimension restriction in Theorem \ref{thm:pm1},  as in \cite[Thm.~1.1]{CGNP}, the proof makes use of the existence and regularity theory of marginally outer trapped surfaces (see the next section for discussion), which restricts the dimension $n$ to the range $3 \le n \le 7$.  The additional restriction $n \ge 4$  is a technical condition arising in the proof of  \cite[Theorem 1.3]{CGNP}, which is used in the proof of Theorem \ref{thm:pm1}.  If one assumes that the {\it mass aspect function}  associated to the end $\calE$  has a sign, then Theorem \ref{thm:pm1} also holds  in dimension $n =3$.

In the next section we present some basic definitions and results used in the proof of Theorem \ref{thm:pm1}.  In Section 3 we give the proof of Theorem \ref{thm:pm1} and consider some additional results.

\section{Preliminaries}\label{prelim}

{\it Conformal compactification.}   Let $(M^n,g)$ be a smooth Riemannian manifold of dimension $n$.  Following the treatment in \cite{CGNP} (cf.\ \cite{GW}), we say that $M$ has an ALH toroidal end if there exists an $n$-dimensional submanifold with boundary $\mathcal{E}$, which is closed as a subset of $M$, such that, up to isometry,
\beq\label{eq:conf}
\calE = (0,x_0] \times T^{n-1} , \qquad g|_{\calE} = g_0  + x^{n-2} \kappa + o( x^{n-2})  \,,
\eeq
 where $g_0$ is the locally hyperbolic `reference metric',
 \beq
 g_0 = \frac{1}{x^2}[dx^2 +h]
 \eeq
 and $(T^{n-1}, h)$ is the standard flat torus. (Under the coordinate transformation $r = \frac{1}{x}$ we obtain \eqref{eq:lochyp}.)  As defined, $\calE$ is conformally compactifiable,  with defining function $x$ and with conformal infinity $\d_{\infty}\calE = \{0\} \times T^{n-1}$.  We assume that all terms in \eqref{eq:conf} are at least $C^2$,  and that the scalar curvature satisfies the mild decay condition 
 \cite[Eq.~4.1]{CGNP}.   We also assume that the error term satisfies the additional decay condition  \cite[Eq.~4.2]{CGNP}.  This is satisfied, for example, by the Birmingham-Kottler metrics and the Horowitz-Myers soliton.    
 
 The term $\kappa$ in \eqref{eq:conf} is a $(0,2)$ tensor on $T^{n-1}$, called the mass aspect tensor.  The {\it mass aspect function} $\mu$ is obtained by tracing $\kappa$ with respect to  $h$, $\mu = {\rm tr}_{h} \kappa$.  Then we define the mass $m = m(g)$ with respect to the end $\calE$,  as 
\beq\label{eq:mass}
m = \int_{T^{n-1}} \mu \ d\Omega_h  \,.
\eeq
Under a suitable normalization, the mass, so defined, is a geometric invariant of $g$.

With respect to a toroidal end $\calE$ of $M$, we say that  $N = \overline{M\setminus\calE}$ is {\it asymptotically retractible onto} $\d\cal{E}$ if $N$ can be expressed as the union of an increasing sequence of sets $N_i$, each of which is a retract onto $\d \cal{E}$.  
If $N$ is diffeomorphic to a product, $N \approx [0, \infty) \times T^{n-1}$, then it is trivially asymptotically retractible onto $\d\cal{E}$.  However $N$'s with much more complicated topology, e.g by taking connected sums, can satisfy this condition.

\smallskip
\noindent
{\it Marginally outer trapped surfaces.}  The proof of Theorem \ref{thm:pm1} makes use of properties of marginally outer trapped surfaces (MOTS).  Here we review  basic definitions, and certain properties of MOTS.  

MOTS are defined with respect to initial data sets.  An initial data set $(M, g, K)$ consists of a  Riemannian manifold $(M, g)$  and a symmetric $(0, 2)$-tensor field $K$.  
The most basic physical application is when $M$ is a spacelike hypersurface in a spacetime 
$\overline{M}$, with induced metric $g$ and second fundamental form $K$. 

Associated to an initial data set $(M, g, K)$, are the \textsl{local energy density} $\mu$ and the \textsl{local current density} $J$ of $(M, g, K)$, given by 
\beq\label{eq:muj}
\mu = \frac{1}{2} \, \left( R - |K |^2 + (\text{tr} \, K)^2 \right) \quad \text{ and } \quad 
J = \text{div} \left(K - (\text{tr} \, K) \, g\right) \,,
\eeq
where $R$ is the scalar curvature of $(M, g)$.  When $(M, g, K)$ is an initial data set embedded in a spacetime $\overline{M}$ satisfying the Einstein equations, these quantities are obtained from expressions involving the energy-momentum tensor via the  Gauss-Codazzi equations.
$(M,g,K)$ is said to satisfy the \textsl{dominant energy condition} (DEC)  if 
\[
\mu \geq |\, J \, |.
\]

Let $\Sigma \subset M$ be a closed (compact without boundary) two-sided hypersurface in $M$, and let $\nu$ be a smooth unit normal field along $\S$ in $M$. By convention, we refer to $\nu$ as outward pointing.  The mean curvature of $\S$ in $M$ can then be expressed as,
\[
H = \text{div}_\Sigma \, \nu \,.
\]

We  define the null expansions scalars $\th^+$ and $\th^-$  along $\S$ as follows,
\[
\theta^+ =  \text{tr}_\Sigma (K) +H \qquad \text{ and } \qquad 
\theta^- = \text{tr}_\Sigma (K) - H \,.
\]
When $(M,g,K)$ is an initial data set in a spacetime $\overline{M}$, $\th^{\pm}$ may be expressed as
$$
\th^{\pm} = \div_{\S} \ell^{\pm} 
$$
where $ \ell^{\pm} = u \pm \nu$ are the future-directed null normal vector fields along $\S$. (Here, $u$ is the future directed unit normal to $M$.)  Thus, $ \ell^{\pm}$ measures the divergence of the outgoing/ingoing light rays from $\S$.

We say that $\Sigma$ is outer trapped if $\theta^+ < 0$ and weakly outer trapped if 
$\theta^+ \leq 0$.  If $\theta$ vanishes identically, $\th\equiv 0$, 
$\S$ is called a marginally outer trapped surface (MOTS for short). 

Despite the absence of a variational characterization of MOTS like
that for minimal surfaces, MOTS have been shown to satisfy a number of properties
analogous to those of minimal surfaces in Riemannian geometry.  We will need to make use of the basic existence results for MOTS.

\begin{lem}[\cite{AndMet,Eic09,Eic10}]\label{thm:MOTSexist}
Let $(M,g,K)$ be an $n$-dimensional, $3\le n\le 7$, compact-with-boundary initial data set. Suppose that the boundary can be expressed as a disjoint union $\d M=\Sigma_{\inn}\cup\Sigma_{\out}$, where $\Sigma_{\inn}$ and $\Sigma_{\out}$ are nonempty unions of components of $\d M$ with $\theta^+\le 0$ on $\Sigma_{\inn}$ with respect to the normal pointing into $M$ and $\theta^+>0$ on $\Sigma_{\out}$ with respect to the normal pointing out of $M$. Then there is an outermost separating MOTS in $(M,g,K)$ that is homologous to  $\Sigma_{\out}$.
\end{lem}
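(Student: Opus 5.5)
We would follow Eichmair's approach via the Jang equation with capillarity regularization \cite{Eic09,Eic10}. Andersson--Metzger \cite{AndMet} give an alternative route through deformation of surfaces and MOTS curvature estimates. The idea is to realize a MOTS as the boundary of the region where a blow-up solution of a regularized Jang equation tends to $\pm\infty$.

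\emph{Step 1: barriers.} Near $\Sigma_{\out}$ we have $\theta^+>0$ with respect to the normal pointing out of $M$. Near $\Sigma_{\inn}$ we have $\theta^+\le 0$ with respect to the normal pointing into $M$. After a small perturbation (pushing $\Sigma_{\inn}$ slightly inward, or using a first-order deformation together with the strong maximum principle) we may assume strict inequalities. These inequalities make $\Sigma_{\out}$ and $\Sigma_{\inn}$ act as one-sided barriers for the operator $\Sigma\mapsto\theta^+[\Sigma]$.

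\emph{Step 2: regularized Jang problem.} For $t>0$ we solve
\[
H(\mathrm{graph}\,u)-\mathrm{tr}(K)(\mathrm{graph}\,u)=t\,u
\]
on $M$, where $H$ and $\mathrm{tr}(K)$ are computed for the graph of $u$ in $M\times\bbR$. We impose boundary data $u=+\delta^{-1}$ on $\Sigma_{\out}$ and $u=-\delta^{-1}$ on $\Sigma_{\inn}$, with signs chosen compatibly with the barriers. The capillarity term $tu$ gives sup bounds $|tu|\le C$. The barriers from Step~1 give boundary gradient estimates, since the strict sign of $\theta^\pm$ lets us construct local sub- and supersolutions near $\d M$. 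Interior gradient estimates come from the Schoen--Yau type estimates for Jang graphs. Leray--Schauder theory then yields solutions $u_t$.

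\emph{Step 3: blow-up limit.} Let $t\to 0$. The graphs of $u_t$ have uniformly bounded mean curvature, and they are stable in the sense of $C$-almost minimizing hypersurfaces in $M\times\bbR$. For $n\le 7$, geometric measure theory compactness and regularity give subsequential convergence of the vertically translated graphs to smooth limits. Where the $u_t$ diverge, the limit is cylindrical, $\Sigma\times\bbR$, and $\Sigma$ is then a closed smooth hypersurface with $\theta^+=0$ or $\theta^-=0$. The boundary data and barriers force the blow-up region to avoid $\d M$ and to separate $\Sigma_{\inn}$ from $\Sigma_{\out}$. Hence its boundary contains a MOTS homologous to $\Sigma_{\out}$.

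\emph{Step 4: outermost.} Consider all separating weakly outer trapped closed hypersurfaces homologous to $\Sigma_{\out}$ and lying in the region between the MOTS found in Step~3 and $\Sigma_{\out}$. We take the boundary of the union of their exterior-enclosed regions. Redoing Step~2 in the complement of a given MOTS, with that MOTS now serving as $\Sigma_{\inn}$, produces a MOTS enclosing any two given ones; this is the trapped-union property. A compactness argument using uniform curvature estimates for stable MOTS (valid for $n\le7$) shows that the supremum is attained by a smooth MOTS, the outermost one.

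\emph{Main obstacle.} We expect the main obstacle to be Step~3, namely the regularity and compactness of the blow-up limit. This is where the dimension restriction $n\le 7$ enters. We would first try to obtain it from Eichmair's almost-minimizing property of Jang graphs, citing \cite{Eic09,Eic10} for the details.
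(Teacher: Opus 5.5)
The paper gives no proof of this lemma. It is imported from \cite{AndMet,Eic09,Eic10}, and your outline follows Eichmair's regularized-Jang route, which is the right source. As written, though, Steps~2--3 have a genuine gap: with $t$-independent Dirichlet data $u=\pm\delta^{-1}$, nothing forces the $u_t$ to blow up as $t\to0$, so no MOTS need appear.

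Here is a counterexample. Take $K=0$, and let $M$ be the region of the spatial Schwarzschild manifold between two coordinate spheres on opposite sides of the horizon. Both barrier inequalities hold strictly, and the horizon is a MOTS in the interior. Your equation becomes $\mathrm{div}\bigl(\nabla u/\sqrt{1+|\nabla u|^2}\bigr)=tu$. At an interior positive maximum the left side is $\le 0$, and at an interior negative minimum it is $\ge 0$. Hence $|u_t|\le\delta^{-1}$ for every $t$: there is no blow-up set, and the limit is a bounded minimal graph.

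The missing idea is that the boundary values must scale like $1/t$. For example, take $u_t=\pm\delta/(2t)$ on the two boundary parts, with signs dictated by your convention, and with a fixed $\delta$ smaller than the barrier margins. This is where strictness is really used; data $\pm1/t$ would need margins larger than $1$. Then $tu_t=\pm\delta/2$ on $\d M$. Let $d$ be the distance to $\d M$ and $\psi$ a steep, $t$-independent profile with $\psi\to\infty$ at some $d=d_0$. The strict barrier inequalities on nearby leaves make $\pm\bigl(\delta/(2t)-\psi(d)\bigr)$ one-sided comparison functions near the respective boundary parts. These give $t$-independent bounds forcing $u_t\to\pm\infty$, with opposite signs, on fixed collars of $\Sigma_{\inn}$ and $\Sigma_{\out}$.

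Two consequences follow from this fix.
\begin{enumerate}
\item Since $tu_t$ no longer tends to $0$, a cylinder obtained by translating at points where $tu_t\to c$ only satisfies $H-\mathrm{tr}K=c$. You must take the limit at heights where $tu_t\to0$, e.g.\ of the untranslated graphs near height $0$, which $u_t$ must cross between the two collars. Only there do the limiting cylinders satisfy $\theta^+=0$ or $\theta^-=0$ exactly.
\item The hypotheses give only one-sided barriers: on $\Sigma_{\out}$ you know $H+\mathrm{tr}K>0$, not $H-\mathrm{tr}K>0$. Solving the Dirichlet problem at fixed $t$ needs two-sided boundary gradient estimates, i.e.\ a condition like $H_{\d M}>|\mathrm{tr}_{\d M}K|$ with room for $|tu_t|$. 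This must be arranged separately, e.g.\ by modifying the data in a thin collar whose leaves stay strict barriers, and then keeping the MOTS out of the collar by the maximum principle.
\end{enumerate}

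There are also smaller issues.
\begin{enumerate}
\item In Step~1, deforming $\Sigma_{\inn}$ cannot always produce $\theta^+<0$. If a component of $\Sigma_{\inn}$ is a MOTS with principal eigenvalue $0$, no first-order deformation has a strict sign, and pushing a strictly stable one into $M$ makes $\theta^+>0$. It is cleaner to perturb the data. Replacing $K$ by $K-\varepsilon g$ lowers $\theta^+$ of every hypersurface by $(n-1)\varepsilon$, so $\Sigma_{\inn}$ becomes strictly trapped while $\Sigma_{\out}$ stays untrapped. Every weakly outer trapped surface of the original data is then enclosed by the outermost MOTS $\Sigma_\varepsilon$ of the new data, and you let $\varepsilon\to0$.
\item In Step~4, enclosing two given MOTS requires using the boundary of the union of the regions they bound as the inner barrier. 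That boundary is only Lipschitz, so a smoothing or approximation argument is needed; this is a real step in the cited works, not a rerun of Step~2.
\item The compactness for $n\le 7$, both in Step~4 and in the limit $\varepsilon\to0$, comes from Eichmair's $C$-almost minimizing property plus geometric measure theory regularity. It does not come from curvature estimates for stable MOTS.
\end{enumerate}
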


A considerable amount of work has been done on rigidity aspects of MOTS; see e.g. \cite{EGM,GM18, GMTrans}, to name just a few.  Here, we will make use of a local rigidity result  in \cite{motsv4}, which, together with certain arguments used to prove it, has played a role in some of these rigidity papers.

Let $\S$ be a separating MOTS in an initial data set $(M,g,K)$, with `outward' normal 
$\nu$.  Then $\Sigma$ is said to be {\it weakly outermost} if there is no closed embedded surface outside of $\S$ (side to which $\nu$ points) with $\theta^+ < 0$ that is homologous to $\S$. 

\begin{lem}[\cite{motsv4}]\label{locrigid}
Let $(V^n,h,K)$, $n \ge 3$, be an initial data set satisfying the DEC, $\mu \ge |J|$. 
Suppose $\S^{n-1}$ is a weakly outermost MOTS in  $V^n$
that does not admit a metric of positive scalar curvature.  Then there exists
an outer neighborhood $U \approx [0,\e) \times \S$ of $\S$ in $V$ such that each slice
$\S_t = \{t\} \times \S$, $t \in [0,\e)$ is a MOTS.  
\end{lem}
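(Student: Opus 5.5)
The plan follows the standard route for local rigidity of weakly outermost MOTS. First, stability gives $\lambda_1=0$. Then a constant-$\theta^+$ foliation is built by the implicit function theorem. Finally, a differential inequality for the leafwise expansion, driven by the non-PSC hypothesis, forces every leaf to be a MOTS.

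\emph{Step 1: principal eigenvalue.} Let $L$ be the MOTS stability operator of $\S$, defined by $\frac{d}{ds}\th^+(\S_s)\big|_{s=0}=L\phi$ for a normal variation with speed $\phi\nu$. Writing $X$ for the tangential part of $K(\nu,\cdot)$, one has
$$
L\phi=-\Delta\phi+2\langle X,\D\phi\rangle+\Big(Q+\div X-|X|^2\Big)\phi,\qquad Q=\tfrac12 R_\S-(\mu+J(\nu))-\tfrac12|\chi^+|^2,
$$
where $R_\S$ is the scalar curvature of $\S$ and $\chi^+$ is its null second fundamental form. $L$ has a real principal eigenvalue $\lambda_1$ with a positive eigenfunction $\phi_1$. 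If $\lambda_1<0$, pushing $\S$ outward with speed $\phi_1$ would produce an outer trapped surface outside $\S$ homologous to it, contradicting that $\S$ is weakly outermost. Hence $\lambda_1\ge0$.

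Now set $\psi=\ln\phi_1$. Multiply by test functions and complete the square in the $X$ and $\D\psi$ terms, as in Galloway--Schoen. This shows that the symmetrized operator $-\Delta+\tfrac12 R_\S-(\mu+J(\nu))-\tfrac12|\chi^+|^2$ has nonnegative principal eigenvalue. If that eigenvalue were positive, then (using DEC) so would be that of $-\Delta+\tfrac12 R_\S$, and a conformal change would give $\S$ a metric of positive scalar curvature. Since $\S$ admits no such metric, $\lambda_1=0$.

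\emph{Step 2: foliation with constant expansion.} Because $\lambda_1=0$ is simple, the map $(u,k)\mapsto\big(\th^+(\text{graph of }u\nu)-k,\ \int_\S u\big)$ has invertible linearization at $(0,0)$ on suitable H\"older spaces. Its kernel is spanned by $\phi_1$, and the cokernel is handled by the constant $k$. The implicit function theorem then yields $u(t)=t\phi_1+o(t)$ and constants $\th(t)$ such that the graphs $\S_t$, $t\in[0,\e)$, have $\th^+\equiv\th(t)$. Since $\phi_1>0$, these leaves foliate an outer neighborhood $U\approx[0,\e)\times\S$. Each $\S_t$ is diffeomorphic to $\S$, so none of them admits a positive scalar curvature metric.

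\emph{Step 3: forcing $\th(t)\equiv0$.} First, if $\th(t_0)<0$ for some $t_0>0$, then $\S_{t_0}$ is outer trapped, lies outside $\S$ and is homologous to it, contradicting weak outermostness. So $\th\ge0$.

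For the reverse inequality, let $\phi_t>0$ be the lapse of the foliation. The first variation formula at $\S_t$ now carries extra terms because $\th^+\neq0$:
$$
\th'(t)=-\Delta\phi_t+2\langle X,\D\phi_t\rangle+\Big(Q+\div X-|X|^2-\tfrac12\th^2+\th\,\tau\Big)\phi_t,
$$
where $\tau={\rm tr}\,K$. Divide by $\phi_t$ and rerun the completing-the-square argument of Step 1 on $\S_t$. If $\th'(t)-\th(t)\tau$ were positive in the relevant averaged sense, $\S_t$ would carry a PSC metric, which is impossible. This yields $\th'(t)\le C\,\th(t)$ with $C$ bounded by $\sup_U|\tau|$. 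Since $\th(0)=0$, Gr\"onwall gives $\th\le0$ on $[0,\e)$, and therefore $\th\equiv0$. Thus every $\S_t$ is a MOTS.

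\emph{Expected main obstacle.} The delicate point is Step 3. One has to track the $\th$-dependent terms in the variation formula and make the symmetrization argument yield a clean linear inequality $\th'\le C\th$, rather than just a sign statement. The extra terms must be absorbed either by $\th\ge0$ or into the constant $C$, uniformly for $t$ near $0$.
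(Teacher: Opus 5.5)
Your argument is correct and is essentially the proof in the cited source \cite{motsv4}; the present paper only quotes the lemma. The steps match that proof: stability plus the non-PSC hypothesis force $\lambda_1=0$, the implicit function theorem gives an outer foliation by leaves of constant $\th^+$, weak outermostness gives $\th\ge 0$, and the Galloway--Schoen symmetrization on each leaf gives $\th'\le c\,\th$, hence $\th\equiv 0$.

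The one point to state precisely is in Step 3. The PSC argument needs $\th'(t)/\phi_t-\th(t)\tau>0$ at every point of $\S_t$, not in an averaged sense, so the conclusion is that $\th'(t)\le \th(t)\tau(p)\phi_t(p)$ at some point $p\in\S_t$. Using $\th\ge 0$, this gives $\th'(t)\le c\,\th(t)$ with $c=\sup_U(|\tau|\,\phi)$, where $\phi$ is the lapse; your constant $\sup_U|\tau|$ drops the lapse factor.
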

There are further rigidity type conclusions in this theorem, but the existence of this local  foliation by MOTS is sufficient for our purposes.  

Lemma \ref{locrigid} will be used in conjunction with a result from \cite{EGM}.  
An orientable, closed manifold $N^m$ of dimension $m$ is said to satisfy the \textsl{cohomology condition} if there are classes $\omega_1,\, \ldots,\, \omega_m\in H^1(N,\bbZ)$ whose cup product
\begin{align*} 
\omega_1\smile\cdots\smile\omega_m\in H^m(N,\bbZ)
\end{align*}
is non-zero. 
\pagebreak
The $m$-torus $T^m$ is a prototypical example satisfying the cohomology condition. As shown in \cite[Theorem~2.28]{Lee:book},  such a manifold $N^m$, $3 \le m \le 7$, has a component that does not admit a metric of positive scalar curvature (cf. \cite{SY2017}).   We will use this fact, together with the following.

\begin{lem}[{\cite[Lemma 3.1]{EGM}}]\label{lem:cohomology}
Let $M$ be an orientable, compact $n$-dimensional, $n \geq 3$, manifold with boundary. 
Let $\Sigma_0$ be a component of $\partial M$ satisfying the cohomology condition, and suppose there exists a retract of $M$ onto $\S_0$.   Then every closed, embedded hypersurface $\Sigma \subset M$ homologous to $\Sigma_0$ satisfies the cohomology condition.
\end{lem}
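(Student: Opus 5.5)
The plan is to pull back the cohomology classes on $\S_0$ to $M$ via the retraction, restrict them to $\S$, and show that their cup product pairs nontrivially with the fundamental class of $\S$. The nonvanishing should be forced by the homology relation between $\S$ and $\S_0$ together with $r\circ i=\mathrm{id}$. Throughout, $m=n-1$, all (co)homology has $\bbZ$ coefficients, and $\S$ is oriented compatibly with $\S_0$ so that the homology relation holds with integer coefficients.

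\textbf{Setup.} Let $r\colon M\to\S_0$ be the retraction, let $i\colon \S_0\hookrightarrow M$ be the inclusion, so $r\circ i=\mathrm{id}_{\S_0}$, and let $j\colon \S\hookrightarrow M$ be the inclusion of the given hypersurface. Since $M$ is orientable and $\S$ is homologous to $\S_0$, $\S$ is two-sided and hence orientable. Fix its orientation so that
\[
j_*[\S]=i_*[\S_0]\quad\text{in } H_{n-1}(M;\bbZ).
\]
By hypothesis there are $\omega_1,\dots,\omega_{n-1}\in H^1(\S_0;\bbZ)$ with $\omega:=\omega_1\smile\cdots\smile\omega_{n-1}\neq 0$. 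Since $\S_0$ is a connected, closed, orientable $(n-1)$-manifold, evaluation on $[\S_0]$ is an isomorphism $H^{n-1}(\S_0;\bbZ)\cong\bbZ$. Therefore $\langle\omega,[\S_0]\rangle\neq 0$.

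\textbf{Construction and computation.} Define $\alpha_k:=j^*r^*\omega_k\in H^1(\S;\bbZ)$. Pullback is a ring homomorphism, so
\[
\alpha_1\smile\cdots\smile\alpha_{n-1}=j^*r^*\omega .
\]
Evaluating on the fundamental class of $\S$ and using naturality of the Kronecker pairing, the homology relation, and $r\circ i=\mathrm{id}$, we get
\[
\langle j^*r^*\omega,[\S]\rangle=\langle r^*\omega,j_*[\S]\rangle=\langle r^*\omega,i_*[\S_0]\rangle=\langle (r\circ i)^*\omega,[\S_0]\rangle=\langle\omega,[\S_0]\rangle\neq 0 .
\]
Hence $\alpha_1\smile\cdots\smile\alpha_{n-1}\neq 0$ in $H^{n-1}(\S;\bbZ)$, so $\S$ satisfies the cohomology condition.

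\textbf{Main point of care.} The only delicate step is bookkeeping when $\S$ is disconnected. In that case $[\S]$ is the sum of the fundamental classes of its components, and the computation above shows the top-degree class is nonzero on at least one component. Since the condition only requires the cup product to be nonzero in $H^{n-1}(\S;\bbZ)$, this suffices. It also yields a component of $\S$ carrying a nonzero cup product, which is what is needed downstream to apply \cite[Theorem~2.28]{Lee:book} and rule out positive scalar curvature.
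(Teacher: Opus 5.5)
Your argument is correct and is essentially the standard proof from \cite{EGM}; the paper itself does not prove this lemma and only cites it. The argument there is the same as yours: compose the retraction with the inclusion of $\S$, use $j_*[\S]=i_*[\S_0]$ and naturality of the Kronecker pairing, and conclude that the pulled-back cup product evaluates to $\langle\omega,[\S_0]\rangle\neq 0$ on $[\S]$. This also covers the slightly weaker hypothesis in \cite{EGM}, where $F|_{\S_0}$ is only assumed to have nonzero degree, because the pairing then just picks up the nonzero factor $\deg(F|_{\S_0})$.
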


We remark that in \cite{EGM}, a condition slightly more general than assuming the existence of a retract $F:M \to \S$ is assumed.  

\section{Proof of Theorem \ref{thm:pm1} and further comments}

\proof[Proof of Theorem \ref{thm:pm1}]    As  in the proof of \cite[Thm. 1.1]{CGNP}, we introduce a certain initial data set and make use of properties of MOTS.   The initial data set used here is motivated by arguments in \cite[Sec. 6]{LLU23} and \cite{TinYau} (cf.\  \cite[Sec. 3.3]{GMEH}). 

Suppose, by contradiction, that the mass, as given in \eqref{eq:mass}, is negative, 
$m <0$.
Let $\calE$ be the asymptotically toroidal end in $M$, with notation as in Section \ref{prelim}.  For $x_1 \in (0, x_0)$, let $\S_1$ be the toroidal slice $x = x_1$.  As computed in \cite[Section 6]{CGNP}, the mean curvature $H_1$ of $\S_1$, with respect to 
the `outward' normal $\nu$ (i.e the normal pointing towards the conformal boundary) of 
$\S_1$ is given by,
\beq\label{eq:mean}
H_1 = (n-1) - \frac12 n \mu x^n + o(x^n)  \,,
\eeq
where, recall, $\mu =  {\rm tr}_{h} \kappa$ is the mass aspect function.  Now, by applying 
Theorem 1.3 in \cite{CGNP}, we may assume, without loss of generality, that $\mu$ is constant, and, moreover, by \eqref{eq:mass} must be negative, $\mu < 0$.  By choosing $x_1$ sufficiently small, equation \eqref{eq:mean} then implies 
\beq\label{h1}
H_1 > \l (n-1)   \quad \text{for some  number $\l > 1$.}
\eeq

Now consider the initial data set $(M_0,g ,\hat{K} = K  - \frac{h}{n-1}g)$, where $K = -\l g$. Here  $M_0$ is an open submanifold of $M$, and $h$ is a smooth real-valued function on $M_0$,
  which we describe momentarily after identifying several open sets in $M$.  For $x_2 \in (x_1,x_0)$, let $U_2 = \{(x,x^a) \in \calE: x <  x_2\}$, and let $U_1 =  {\rm int}\ \calE = \{(x,x^a) \in \calE: x <  x_0\}$. 
Finally, let $U_0 = N_0 \cup U_1 = N_0 \cup \calE$, 
where, for any $D_0 > 0$,  $N_0 = \{p \in M: d(p, \d U_1) < D_0\}$.  By completeness, $\overline{U_0\setminus \calE}$ and $\d U_0$ are compact. 
 Also, set $D_1 = d(\d U_1, \d U_2)$. (Here  $d$ is the Riemannian distance with respect 
 to~$g$.)

\newpage
With $M_0$ and $h \in C^{\infty}(M_0)$  constructed as in \cite[Sec.\ 6]{LLU23} (cf. \cite[Sec.\ 3]{LUY}),  the following holds:

\vs{-.1in}
\ben
\item[(i)] $U_1 \subset M_0 \subset U_0$, with $\d M_0$  a smooth compact hypersurface.

\vs{-.05in}
\item[(ii)]   $h \ge 0$, with $h = 0$ on $U_2$,  and $h \to \infty$ uniformly on approach to 
$\d M_0$.

\vs{-.05in}
\item[(iii)]   For arbitrarily small $\e > 0$,
$$
|\D h| \le \frac{2(1+ \vep)}{(D_0- 2\vep)D_1}  \quad  \text{on}  \quad \overline{U}_1 \setminus  U_2 \,,   
$$

\vs{-.15in}
\item[(iv)]   and,
$$
|\D h| \le \frac{1 + \vep}{2} h^2 < \frac{1}{2}\frac{n}{n-1}h^2  \quad \text{on} \quad M_0 \setminus U_1  \,,
$$
by taking $\vep < \frac{1}{n-1}$.
\een
 As we now indicate, for $M_0$ and $h$ so constructed, the initial data set $(M_0,g ,\hat{K})$
satisfies the DEC, provided, using completeness, one takes $D_0$  sufficiently large.  

By a straightforward computation  \cite{LLU23, TinYau} (see also \cite{GMEH}), we have
\begin{align*}
\hat{\mu}-|\hat{J}|&=\mu+\frac{1}{2}\left(\frac{n}{n-1}h^2-2h\ \div K\right)-|J+Dh|\\
&\ge\mu-|J|+\frac{1}{2}\frac{n}{n-1}h^2 +n\l h -|\D h|  
\end{align*}
where we have used $K = - \l g$.  Further, with respect to the initial data set 
$(M,g,K = -\l g)$, using \eqref{eq:muj} and the scalar curvature assumption gives
$$
\mu - |J| = R +  \l^2 n(n-1) \ge (\l^2 - 1)n(n-1)  \, ,
$$
and hence,
$$
\hat{\mu}-|\hat{J}| \ge (\l^2 - 1)n(n-1) + n\l h +\frac{1}{2}\frac{n}{n-1}h^2  -|\D h| \,,
$$
where, recall, $\l > 1$.  By taking $D_0$ sufficiently large (as we can by completeness), it now follows easily from (ii)-(iv) that $\hat{\mu}-|\hat{J}| \ge 0$ on $M_0$, and hence that the initial data set  $(M_0,g ,\hat{K})$ satisfies the DEC.

Consider the null expansion of $\th^+_1$ of $\S_1$ with respect $(M,g,\hat{K})$.  Since 
$h  = 0$ on $U_2$, $\hat{K} = K =  -\l g$ on $U_2$.  From \eqref{h1} this implies,
$$
\th^+_1 = \text{tr}_\Sigma (\hat{K}) +H_1 > -\l(n-1) + \l(n-1) = 0.
$$

Now consider a hypersurface $\S_0 \subset M_0$ parallel to $\d M_0$, with respect to a normal neighborhood of $\d M_0$.
The null expansion of 
$\S_0$ is given by,
$$
\th^+_0 = \text{tr}_{\S_0} (\hat{K}) +H_0  = -\l (n-1) - h|_{\S_0}  +H_0  \,. 
$$
Since $h$ becomes arbitrarily large near $\d M_0$,  it follows  that $\th^+_0  <  0$ if 
$\S_0$ is taken sufficiently close to $\d  M_0$.  
Now let $W$ be the compact region bounded by $\S_0$ and $\S_1$.  $W$ satisfies the barrier conditions of Lemma \ref{thm:MOTSexist}, and hence there exists an outermost MOTS $\S_*$ in $W$ homologous to $\S_1$.  

Using the asymptotically retractible assumption, together with the product structure of the region: $x_0 \le x \le x_1$ in $\calE$, it is easily seen that $W$ retracts onto $\S_1$.  
Since $\S_1 \approx T^{n-1}$
 satisfies the cohomology condition, it follows from Lemma~\ref{lem:cohomology} that 
 $\S_*$ satisfies the cohomology condition. Hence, by the comments just before the lemma, some component $\S'_*$ of $\S_*$ does not carry a metric of positive scalar curvature.  Moreover,  $\S'_*$ must be weakly outermost.  Otherwise, by replacing $\S'_*$ by an outer trapped surface, one could apply Lemma \ref{thm:MOTSexist} to obtain a MOTS with at least some component strictly outside of
$\S_*$, thereby contradicting that $\S_*$ is outermost. 
But then by Lemma~\ref{locrigid}, an outer neighborhood of $\S'_*$ is foliated by MOTS, which again contradicts that $\S_*$ is outermost.\qed

\smallskip
\noindent
{\it Remark.}  One may also reach a contradiction by applying the global rigidity result, Theorem 1.2 in \cite{EGM}, to the initial data set $(W,g, - \hat{K})$.
However, as the proof of that theorem depends crucially on the lemmas in Section \ref{prelim}, invoking these lemmas directly leads to a more transparent proof of Theorem \ref{thm:pm1}.

\smallskip

In the absence of the completeness assumption, one still has the following `quantitative shielding' result (cf.\  \cite[Thm. 1.1]{LLU23}, \cite[thm. 1.1]{TinYau}).

\begin{thm}\label{thm:pm2} 
Let $(M^n,g)$, $4\le n\le 7$, be an ALH manifold with conformally compactifiable  toroidal end
$\calE$.  Let $U_0$, $U_1$, and $U_2$ be neighborhoods of $\calE$ such that  
$\overline{U_2}\subset U_1$, $\overline{U_1}\subset U_0$, and $\overline {U_0 \setminus \mathcal E}$ is compact,
and let 
\[D_0={\rm dist}_g(\partial U_0, U_1)\quad\text{and}\quad
D_1 ={\rm dist}_g(\partial U_1, U_2).\] 
Suppose the following hold.

\begin{enumerate}
    \item[(1)]  There is a retraction of $U_0$ onto $\d \calE$,
     
\vs{-.05in}    
    \item [(2)]$R_g\ge -n(n-1)$ on $U_0$, and 
    
\vs{-.05in}       
    \item[(3)] \label{scalar_bound} the scalar curvature satisfies the strictness condition,
    \begin{equation}\label{eq:strictness}
R_g +n(n-1) >\frac{4}{D_0 D_1}\quad \text{on}\quad\overline{U_1}\setminus U_2 \,.
    \end{equation}
\end{enumerate}
Then the mass of the end $\calE$ is nonnegative.
\end{thm}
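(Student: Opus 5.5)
We argue by contradiction and run the proof of Theorem~\ref{thm:pm1} again. The only change is in how the dominant energy condition is obtained on the transition region $\overline{U}_1\setminus U_2$. There, largeness of $D_0$ (which came from completeness) is no longer available, and the strictness condition \eqref{eq:strictness} takes its place.

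\emph{Setup.} Suppose $m<0$. By \cite[Theorem 1.3]{CGNP} we may take $\mu$ constant and negative. Then \eqref{eq:mean} lets us choose a toroidal slice $\S_1=\{x=x_1\}$, lying inside $U_2$ and far enough out, with $H_1>\l(n-1)$ for some $\l>1$. We may take $\l$ as close to $1$ as we like; only $\l>1$ will matter.

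\emph{Construction of $M_0$ and $h$.} We build $M_0$ with $U_1\subset M_0\subset U_0$ and $h\in C^\infty(M_0)$ exactly as in \cite[Sec.~6]{LLU23}, now using the given $D_0$ and $D_1$. Properties (i)--(iv) only use the distances $D_0$, $D_1$ and the compactness of $\overline{U_0\setminus\calE}$, not completeness of $M$. In particular, for every small $\vep>0$,
$$
|\D h|\le \frac{2(1+\vep)}{(D_0-2\vep)D_1}\quad\text{on } \overline U_1\setminus U_2 .
$$

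\emph{DEC for $(M_0,g,\hat K)$, $\hat K=-\l g-\frac{h}{n-1}g$.} With the correct normalization $\mu=\frac12\bigl(R+\l^2n(n-1)\bigr)$ for $K=-\l g$, the computation in the proof of Theorem~\ref{thm:pm1} gives
$$
\hat\mu-|\hat J|\ \ge\ \tfrac12\bigl(R+\l^2 n(n-1)\bigr)+n\l h+\tfrac12\tfrac{n}{n-1}h^2-|\D h| .
$$
We check nonnegativity region by region.
\begin{itemize}
\item On $U_2$ we have $h=0$, and the bound reduces to hypothesis (2).
\item On $M_0\setminus U_1$, hypothesis (2) makes the first term nonnegative, and (iv) makes $\frac12\frac{n}{n-1}h^2-|\D h|\ge0$.
\item On $\overline U_1\setminus U_2$, we drop the $h$-terms, which are nonnegative. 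Since $\l>1$, hypothesis (3) gives $\frac12\bigl(R+\l^2n(n-1)\bigr)>\frac{2}{D_0D_1}$. This strict inequality holds on the compact set $\overline U_1\setminus U_2$, so it has a positive margin there. Choosing $\vep$ small enough, this margin absorbs the discrepancy between $\frac{2(1+\vep)}{(D_0-2\vep)D_1}$ and $\frac2{D_0D_1}$.
\end{itemize}
This last bound is the step I expect to be the main obstacle. It requires keeping track of the factor $\frac12$ in $\mu$ and using compactness to get the uniform margin; this is exactly where the constant $4$ in \eqref{eq:strictness} is used.

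\emph{Conclusion via MOTS.} As in the proof of Theorem~\ref{thm:pm1}, $\th^+_1>0$ on $\S_1$, and $\th^+_0<0$ on a hypersurface $\S_0$ close to $\d M_0$. Lemma~\ref{thm:MOTSexist} then gives an outermost MOTS $\S_*$ in the region $W$ between them. Next, $W$ retracts onto $\S_1$: compose the retraction of $U_0$ onto $\d\calE$ from hypothesis (1) with the product structure of $\calE$ between $\d\calE$ and $\S_1$. By Lemma~\ref{lem:cohomology} and the remarks preceding it, some component of $\S_*$ admits no metric of positive scalar curvature. That component is weakly outermost, so Lemma~\ref{locrigid} gives an outer foliation by MOTS, which contradicts that $\S_*$ is outermost. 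Hence $m\ge0$.
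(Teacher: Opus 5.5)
Your proposal is correct and follows the paper's argument: you rerun the proof of Theorem~\ref{thm:pm1} with the same $M_0$, $h$ and $\hat K$, and use the strictness condition \eqref{eq:strictness} in place of large $D_0$ to get the DEC on $\overline U_1\setminus U_2$. You also correctly use the factor $\frac12$ in $\mu-|J|=\frac12(R+\lambda^2n(n-1))$, which is what makes the constant $4$ the right one. The only deviation is that the paper takes $\lambda=1$ (enough, since $H_1>n-1$ when $\mu<0$), while you keep $\lambda>1$; this is harmless, and it even supplies the uniform margin $\frac12(\lambda^2-1)n(n-1)$ to absorb the $\varepsilon$-error, so your compactness step is not strictly needed.
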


The proof follows along lines similar to that of Theorem \ref{thm:pm1}.   One again assumes, by contradiction, that the mass is negative, and considers as before  the initial data set 
$(M,g, \hat{K})$, except now one can take $K = -g$ (i.e.\ take $\l = 1$).  In checking that $(M,g, \hat{K})$ satisfies the DEC, the strict scalar curvature condition \eqref{eq:strictness} replaces 
the need to take $\l > 1$ and the need to make $D_0$ sufficiently large.  

A version of Theorem \ref{thm:pm2} for manifolds with boundary, when the mean curvature inequality $H \le n-1$ is not satisfied can be obtained along the lines of \cite[Thm.~1.7]{LLU23} and \cite[Thm. 1.2]{TinYau}.   Theorem 3.6 in \cite{GMEH} can also be modified to obtain a 
manifold-with-boundary result in the present setting. 

\bigskip
\noindent
\textsc{Acknowledgements.} We thank Piotr Chru\'sciel and Abrãao Mendes for helpful comments.

\bibliographystyle{amsplain}
\bibliography{torus.bib}

\end{document}